\theoremstyle{plain}
\newtheorem{theorem}{Theorem}[section]
\newtheorem{corollary}[theorem]{Corollary}
\theoremstyle{remark}
\newcommand{\cp}{\ell^1(G,A,\alpha)}
\newcommand{\Cstar}{\ensuremath{{\mathrm C}^\ast}}
\title{Hermitian crossed product Banach algebras}
\author{R. El Harti}
\address{Department of Mathematics and Computer Sciences,
Faculty of Sciences and Techniques,
	University Hassan I, BP 577, Settat, Morocco}
\author{Paulo R.\ Pinto}
\address{Department of Mathematics, CAMGSD, Instituto Superior T\'{e}cnico, Univ.\ Lisboa,
Av.\ Rovisco Pais 1, 1049-001 Lisbon, Portugal.}
\date{\today}
\begin{document}

\begin{abstract}
We show that the Banach *-algebra $\ell^1(G,A,\alpha)$, arising from a C*-dynamical system $(A,G,\alpha)$, is an hermitian Banach algebra if the discrete group $G$ is finite or abelian (or more generally, a finite extension of a nilpotent group).

As a corollary, we obtain that $\ell^1(\mathbb{Z},C(X),\alpha)$ is hermitian, for every topological dynamical system $\Sigma = (X, \sigma)$, where $\sigma: X\to X$ is a homeomorphism of a compact Hausdorff space $X$ and the action is $\alpha_n(f)=f\circ \sigma^{-n}$ with $n\in\mathbb{Z}$.

\end{abstract}

\maketitle

\medskip

{\it Keywords}: Hermitian Banach algebras, group algebras, projective tensor product, topological dynamics

\medskip

MSC\ {46L65, 43A20, 54H20, 46K99}

\tableofcontents

\section{Introduction}

Given a \Cstar-algebra $A$ and $\alpha: G\to \hbox{Aut}(A)$ an action of a discrete group $G$ on $A$, then an involutive Banach algebra $\cp$ of crossed product type can be associated with this  \Cstar-dynamical system $(A,G,\alpha)$. A fundamental class of such examples $\ell^1(\Sigma)=\ell^1(\mathbb{Z},C(X),\alpha)$ arise from topological dynamical systems $\Sigma=(X,\sigma)$, with $X$ a compact and Hausdorff spaces and $\sigma: X\to X$ homeomorphisms, so that we have the \Cstar-dynamical system $(C(X), \mathbb{Z}, \alpha)$, with $\alpha_n(f)=f\circ \sigma^{-n}$ and $n\in\mathbb{Z}$, see \cite{marcel1, tomi}.

A non-C*-problem is to decide when $\cp$ is a hermitian Banach algebra (i.e.\ the spectrum of every self-adjoint element is real). It is an even more appealing and intriguing problem when we restrict to those $\ell^1(\Sigma)$ arising from topological dynamics, as it was actually proposed in \cite{marcel00}. In fact in \cite{marcel00}, the answer was find to be positive if $X$ is a finite set, and then \cite{marcel1} the result was generalized for the cases where all points of $X$ are periodic

The objective of this paper is to prove that $\ell^1(\Sigma)$ is a hermitian Banach algebra, thus answering a question of \cite{marcel00,marcel1}. Moreover, our proof also works in the more general context of a \Cstar-dynamical system $(A,G,\alpha)$ provided is $G$ is finite or a finite extension of nilpotent group (for example, abelian groups) and $A$ is a unital \Cstar-algebra. In order to prove this result, we first isometrically embed
$\cp$ into  $\ell^1(G,\widehat{A},\widehat{\alpha})$, where $\widehat{A}$ is a unital \Cstar-algebra that contains $G$ and $A$ and the action $\widehat{\alpha}$ is inner. Since the latter action is inner, $\ell^1(G,\widehat{A},\widehat{\alpha})$ is proved to be isometrically *-isomorphic to the projective tensor product $\ell^1(G)\ \widehat{\otimes}\ \widehat{A}$. Thus we obtain an embedding (isometric *-homomorphism) of Banach *-algebras
\begin{equation}\label{eq001}
	\cp\, \subset\, \ell^1(G)\ \widehat{\otimes}\ \widehat{A}
\end{equation}
thus our examples of hermitian $\ell^1$ Banach algebras arise from the ones $\ell^1(G)\, \widehat{\otimes}\, \widehat{A}$ that are hermitian.

The plan for the rest of the paper is as follows. In Sect.\ \ref{preliminaries}, we review some elements of the theory of hermitian Banach algebras. Since the *-Banach algebra $\cp$ are the main focus of this paper, we end this section with a somehow detailed definition of it \cite{hartimarcelpinto2,marcel1,tomi}.

In Sect.\ \ref{mainresults}, we first show that if we are in presence of a \Cstar-dynamical system $(B,G,\beta)$ with an inner action $\beta$, then  $\ell^1(G,B,\beta)$ does not depend on the action, namely, $\ell^1(G,B,\beta)$ is isometrically *-isomorphic to $\ell^1(G,B,\hbox{triv})$, with trivial action, which in turn is also isometrically *-isomorphic to the projective tensor product $\ell^1(G)\, \widehat{\otimes}\, B$, see Theorem \ref{lemma1}. Then Theorem \ref{lemma2} shows that for a generic \Cstar-dynamical system $(A,G,\alpha)$, the Banach algebra $\cp$ sits inside in $\ell^1(G,\widehat{A},\widehat{\alpha})$ for a certain \Cstar-algebra $\widehat{A}$ and an inner action $\widehat{\alpha}$. For this to work we need to consider the natural homomorphism $\pi: A\to \widehat{A}$ and then check that $\pi$ is equivarant with respect to actions, namely,
 $$\widehat{\alpha}_g\circ \pi = \pi\circ \alpha_g\ \hbox{for all}\ g\in G.$$
Finally, we we consider a generic \Cstar-dynamical system $(A,G,\alpha)$, we can use the aforementioned lemmas to yield an embedding (isometric *-homomorphism) as in \eqref{eq001} because as Banach *-algebras we have:
$$\cp\, \subset\, \ell^1(G,\widehat{A},\widehat{\alpha}) \, \simeq \, \ell^1(G,\widehat{A},\hbox{triv}) \, \simeq \, \ell^1(G)\, \widehat{\otimes}\, \widehat{A}.$$
 Because the hermitian property passes to Banach *-subalgebras, we look for discrete groups $G$ for which $\ell^1(G)\, \widehat{\otimes}\, B$ is hermitian with $B$ a \Cstar-algebra. This is indeed the notion of $G$ being rigidly symmetric (i.e., $\ell^1(G)\, \widehat{\otimes}\, B$ is hermitian with $B$ a \Cstar-algebra) that appeared in \cite{leptin}. This then leads to the main Theorem \ref{mainthm}, thus given a discrete group $G$ and a unital \Cstar-algebra $A$, then  $\ell^1(G,A,\alpha)$ is hermitian when $G$ is rigidly symmetric.

As in \cite{leptin}, if $G$ is abelian or finte or if $G$ has a central subgroup $N$ such that $G/N$ is finite or nilpotent, then $\ell^1(G,A,\alpha)$ is hermitian.

Finally, thanks to Theorem \ref{mainthm}, we can now claim that $\ell^1(\Sigma)$ is hermitian, as in Corollary \ref{corfinal}, as the group $G$ involved is the abelian group $G=\mathbb{Z}$.

%


\section{Preliminaries}
\label{preliminaries}

As already mentioned in the introduction, a complex Banach *-algebra $A$ (i.e. an involutive Banach algebra) is said to be an hermitian algebra if the spectrum sp$_A(a)$ of every selfadjoint $a\in A$ is a subset of the real
numbers sp$_A(a)\subset \mathbb{R}$. There are several characterizations for a Banach algebra to be hermitian, see for example \cite{palmer2}. Let us just mention one, which is called the Shirali-Ford’s Theorem (see \cite{shirali}) that reads that $A$ is hermitian if and only if $A$ is symmetric (i.e.\  sp$_A(aa^\ast)\subset \mathbb{R}_0^+$ for all $a\in A$). We remark that n fact, Shirali-Ford’s Theorem is about the implication hermitean implies symmetric, and the other implication is much easier. see e.g.\ \cite[Theorem 11.4.1]{palmer2}.

If we have a Banach *-subalgebra $A\subset B$  of a Banach *-algebra $B$, then for every $a\in A$, $\partial\hbox{sp}_A (a) \subset \partial\hbox{sp}_B (a)$ and so $A$ inherits the hermitian property from $B$.

\Cstar-algebras are all hermitian Banach algebras.  For simplicity, we will say that a group $G$ is hermitian if the group Banach algebra $\ell^1(G)$ is hermitian. Then finite groups or abelian groups are all hermitian and, moreover, nilpotent groups are also hermitian, see \cite{palmer2}. It is well known that the free groups $F_n$ are not hermitian as well the disc algebra $A(D)$. It has been known since the 60's that amenable groups might fail be hermitian, see \cite{jenkins} for one example. Note that, it has been recently shown that hermitian groups are amenable, see \cite{samei}.

 Every element of the algebraic tensor product $A\otimes B$ may be written (in many different ways) as a (finite) linear combination of tensor products $\sum a_i \otimes b_i$. The projective norm $||x||_\pi$ of an element $x\in A\otimes B$ is

$$||x||_\pi= \inf \{\sum ||a_i||\ ||b_i||: \ x=\sum a_i\otimes b_i\}.$$

Then the projective tensor product $A\, \widehat{\otimes}\, B$ is the completion of $A \otimes B$ under the projective norm $||\cdot||_\pi$. We remark that the projective norm is a cross norm, i.e.\ $||a\, \otimes \, b||_\pi=||a||_A\, ||b||_B$.

A way to fabricate larger hermitian algebra is when we consider the projective tensor product.
Thus, the projective tensor product $A\, \widehat{\otimes}\, B$ between hermitian Banach algebras is hermitian provided one of them is abelian. This is a consequence of \cite[Corollary 3.3]{bonic}, where this stability result is proved among symmetric algebras, however, symmetric and hermitian Banach algebra are the same by the aforementioned Shirali-Ford’s Theorem \cite{shirali}. As a way of examples, if $G$ and $H$ are hermitian and one of them is abelian then the direct product $G\times H$ is hermitian. This is a consequence of Grothendieck's result $\ell^1(G\times H) \simeq \ell^1(G)\, \widehat{\otimes}\, \ell^1(H)$. If one of the algebras in the tensor product is a \Cstar-algebra, then one can yield more examples of of hermitian algebras, namely, $B$ is a \Cstar-algebra, then $\ell^1(G)\, \widehat{\otimes}\, B$ is hermitian if $G$ is a finite extension of a nilpotent group, see \cite{leptin}.

Since the focus of this paper are the Banach *-algebras of the form $\cp$ and for the benefit of the reader, we now define this Banach algebra, which is quite standard, as follows. Suppose an action $\alpha: G\to \rm{Aut}(A)$ of the discrete group $G$ as $^\ast$-automorphisms of a \Cstar-algebra $A$ is given, so that we have a \Cstar-dynamical system $(A,G,\alpha)$. Let $\Vert \cdot \Vert $ denote the norm of the \Cstar-algebra $A$, and let
\begin{equation}\label{eq:definition_as_maps}
	\cp=\{\texttt{a}: G\longrightarrow A:\ \Vert \texttt{a}\Vert:=\sum_{g\in G} \Vert a_g\Vert <\infty\,\},
\end{equation}
where we have written $a_g$ for $\texttt{a}(g)$.
We supply $\cp$ with the usual twisted convolution product and involution, defined by
\begin{equation}\label{eq:prod}
	(\texttt{a}\texttt{a}^\prime)(g)=\sum_{k\in G} a_k \cdot \alpha_k(a_{k^{-1}g}^\prime)\quad\hbox{with}\quad g\in G\ \hbox{and} \ \texttt{a},\texttt{a}^\prime\in \cp
\end{equation}
and
\begin{equation}\label{eq:involution}
	\quad \texttt{a}^\ast(g)=\alpha_g((a_{g^{-1}})^\ast) \quad\hbox{with}\quad g\in G \ \hbox{and}\ \texttt{a}\in \cp,
\end{equation}
respectively, so that $\cp$ becomes an involutive Banach algebra. If $A=\mathbb C$, then $\ell^1(G,\mathbb C,\textrm{triv})$ is the usual group algebra $\ell^1(G)$.
If $G=\mathbb Z$ and $A$ is a commutative unital \Cstar-algebra, hence of the form $\textrm{C}(X)$ for a compact Hausdorff space $X$, then there exists a homeomorphism $\sigma: X\to X$ such that the $\mathbb Z$-action  is given by $\alpha_n(f)=f\circ \sigma^{-n}$ for all $f\in\textrm{C}(X)$ and $n\in\mathbb Z$. See \cite{marcel1}, for recent developments of the  Banach algebra $\ell^1(\mathbb Z,\textrm{C}(X), \alpha)$.

Let us return to the case of an arbitrary \Cstar-algebra $A$, and assume for that $A$ is unital. A convenient way to work with $\cp$ is then provided by the following observations.

For $g\in G$, let $\delta_g: G\to A$ be defined by
$$\delta_g(k)=\left\{
\begin{array}{ll}
	1_A & \hbox{if}\ k=g;\\
	0_A & \hbox{if}\ k\not=g,
\end{array} \right.$$
where $1_A$ and $0_A$ denote the unit and the zero element of $A$, respectively. Then $\delta_g\in \cp$, and $\Vert \delta_g\Vert =1$ for all $g\in G$. Furthermore, $\cp$ is unital with $\delta_e$ as the unit element, where $e$ denotes the identity element of the group $G$. Using \eqref{eq:prod}, one finds that \begin{equation*}
	\delta_{gk}=\delta_{g}\cdot \delta_k \quad
\end{equation*}
for all $g,k\in G$. Hence, for all $g\in G$, $\delta_g$ is invertible in $\cp$, and, in fact, $\delta_g^\ast=\delta_{g^{-1}}$. It is now obvious that the set $\{\,\delta_g : g\in G\,\}$ consists of norm one elements of $\cp$, and that it is a subgroup of the invertible elements of $\cp$ that is isomorphic to $G$.

In the same vein, it follows easily from \eqref{eq:prod} and \eqref{eq:involution} that we can view
$$A\subset \cp$$
as a closed *-subalgebra, namely as $\{\,a\delta_e: a\in A\,\}$, where $a\delta_e$ is the element of $\cp$ that assumes the value $a\in A$ at the identity $e\in G$, and the value $0_A\in A$ elsewhere.

If $\texttt{a}\in \cp$, then it is easy to see that $\texttt{a}=\sum_{g\in G} (a_g \delta_e)\delta_g$ as an absolutely convergent series in $\cp$. Hence, if we identify $a_g \delta_e$ and $a_g$, we have $\texttt{a}=\sum_{g\in G} a_g \delta_g$ as an absolutely convergent series in $\cp$.

Finally, let us note that an elementary computation, using the identifications just mentioned, shows that the identity
\begin{equation}\label{eq:conjugation}
	\delta_g a\delta_g^{-1}=\alpha_g(a)
\end{equation}
holds in $\cp$ for all $g\in G$ and $a\in A$, see \cite{marcel1,hartimarcelpinto2}.

If $G=\mathbb Z$ and $A$ is a commutative unital \Cstar-algebra, hence of the form $\textrm{C}(X)$ for a compact Hausdorff space $X$, then there exists a homeomorphism of $X$ such that the $\mathbb Z$-action  is given by $\alpha_n(f)=f\circ \sigma^{-n}$ for all $f\in\textrm{C}(X)$ and $n\in\mathbb Z$.  The algebra $\ell^1(\mathbb Z,\textrm{C}(X), \alpha)$ has been studied in \cite{marcel1}, \cite{JTStudia}, and \cite{marcel1},

\section{Main results}
\label{mainresults}

The proof of the first isomorphism of the following result is an adaptation from the \Cstar-crossed product algebras.
\begin{theorem}\label{lemma1}
Let $\beta: G\to Aut(B)$ be an inner action of a discrete group $G$ on a \Cstar-algebra $B$. Then we have
\begin{equation*}
	\ell^1(G;B,\beta) \ \simeq\ \ell^1(G,B,\textrm{triv})\ \simeq\ \ell^1(G)\, \widehat{\otimes}\, B
\end{equation*}
as isometric *-isomorphisms
\end{theorem}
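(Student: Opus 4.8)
The plan is to use the unitaries that implement the inner action to \emph{untwist} the convolution product, turning $\beta$ into the trivial action, and then to identify the trivially twisted algebra with a projective tensor product. By an inner action I mean that $\beta_g=\mathrm{Ad}(u_g)$ for a unitary representation $u\colon G\to U(B)$, $g\mapsto u_g$, of $G$ inside the unital \Cstar-algebra $B$; thus $u_e=1_B$, $u_{gh}=u_gu_h$, $u_g^\ast=u_{g^{-1}}$, and $\beta_g(b)=u_gbu_g^\ast$ for all $b\in B$. Throughout I would work on the dense $\ast$-subalgebra of finitely supported functions $\sum_g b_g\delta_g$ and extend the maps by continuity once they are shown to be isometric.

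For the first isomorphism the idea is that, although the $\delta_g$ do not commute with $B$ inside $\ell^1(G,B,\beta)$, one can correct them: set $\tilde\delta_g:=u_g^\ast\delta_g$. Using $\delta_g b\delta_g^{-1}=\beta_g(b)$ from \eqref{eq:conjugation}, two short computations form the core of the argument, namely $\tilde\delta_g\,b=b\,\tilde\delta_g$ for all $b\in B$ (so the new group elements commute with the copy of $B$) and $\tilde\delta_g\tilde\delta_h=\tilde\delta_{gh}$, the latter reducing to $u_g^\ast\beta_g(u_h^\ast)=u_h^\ast u_g^\ast=u_{gh}^\ast$; one checks similarly from \eqref{eq:involution} that $\tilde\delta_g^\ast=\tilde\delta_{g^{-1}}$. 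Hence every element of $\ell^1(G,B,\beta)$ is an absolutely convergent sum $\sum_g c_g\tilde\delta_g$ with $c_g=b_gu_g\in B$, and the assignment $\sum_g c_g\tilde\delta_g\mapsto \sum_g c_g\delta_g$, equivalently $b\,\delta_g\mapsto b\,u_g\,\delta_g$, is the desired map onto $\ell^1(G,B,\mathrm{triv})$. The two computations above make it multiplicative and $\ast$-preserving, and since each $u_g$ is unitary the coefficient at $g$ keeps its norm, so the map is isometric and extends to an isometric $\ast$-isomorphism, with inverse $b\,\delta_g\mapsto b\,u_g^\ast\,\delta_g$.

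For the second isomorphism I would observe that under the trivial action \eqref{eq:prod} and \eqref{eq:involution} collapse to $(b\delta_g)(b'\delta_h)=bb'\,\delta_{gh}$ and $(b\delta_g)^\ast=b^\ast\delta_{g^{-1}}$, which are precisely the product and involution of the tensor product algebra under the correspondence $b\,\delta_g\leftrightarrow\delta_g\otimes b$. It then remains only to see that this correspondence is isometric for the projective norm, i.e.\ the standard identification $\ell^1(G)\,\widehat{\otimes}\,B\simeq\ell^1(G,B)$: the cross-norm property gives $\big\|\sum_g\delta_g\otimes b_g\big\|_\pi\le\sum_g\|b_g\|$, while the reverse inequality follows by pairing with the norm-one functionals on $\ell^1(G)$ dual to the $\delta_g$. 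Chaining the two maps yields the isomorphisms in the statement.

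The step I expect to require the most care is the very formulation of ``inner action''. The untwisting works only because the implementing unitaries can be chosen to form a genuine homomorphism $g\mapsto u_g$; if one merely knows that each $\beta_g$ is individually inner, the unitaries satisfy $u_gu_h=\omega(g,h)u_{gh}$ for a $2$-cocycle $\omega$, and then $\tilde\delta_g\tilde\delta_h=\overline{\omega(g,h)}\,\tilde\delta_{gh}$, so the procedure lands in a \emph{twisted} crossed product rather than in $\ell^1(G,B,\mathrm{triv})$. Once this hypothesis is fixed (as it is in the situation where this theorem will be applied, $B$ being an algebra that literally contains a unitary copy of $G$), everything else — multiplicativity, compatibility with the involutions, and the isometric identification of the projective norm with the $\ell^1$-norm — is a routine verification on finitely supported functions.
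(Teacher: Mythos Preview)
Your proof is correct and follows essentially the same route as the paper: the untwisting map $b\,\delta_g\mapsto b\,u_g\,\delta_g$ is exactly the paper's $\varphi$ (your introduction of $\tilde\delta_g=u_g^\ast\delta_g$ is a pleasant way to motivate it), and the identification $\ell^1(G,B,\mathrm{triv})\simeq\ell^1(G)\,\widehat{\otimes}\,B$ is the same standard fact, with only a cosmetic difference in how the reverse norm inequality is justified. Your closing caveat that ``inner'' must mean the implementing unitaries form a genuine homomorphism $g\mapsto u_g$ (else a $2$-cocycle twist appears) is well taken and is indeed used implicitly in the paper's computation; in the intended application $B=\widehat{A}$ contains $\lambda(G)$, so this hypothesis is satisfied.
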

\begin{proof}
For every $g\in G$, let $u_b\in B$ be the implementing unitary of the automorphism $\beta_g$, so that $\beta_g(b)=u_bbu_b^\ast$ (as we assume that $\beta$ is inner). We also have the trivial action $\iota$ of $G$ on $A$, thus $\iota_g(b)=b$ for all $b\in B$ and $g\in G$.

Let $\delta_g\in \ell^1(G;B,\beta)$ and  let $v_g\in \ell^1(G,B,\textrm{triv})$ be the unitaries corresponding to the group elements. We thus have $\beta_g(b)=\delta_g b\delta_g^\ast$ and $v_g b=\iota_g(b)v_g=bv_g$.

 Then we consider the map $\varphi: \ell^1(G,B,\beta) \ \to\ \ell^1(G,B,\textrm{triv})$ such that
$$b\ \delta_g \mapsto bu_g\, v_g.$$
We check that $\varphi$ is a *-isometric bijective homomorphism. First, $\varphi$ is
clearly a linear map bijection between the (algebraic) group algebras, whose inverse maps $bu_g^\ast\delta_g\mapsto a\, v_g$. Then $\varphi$   is *-preserving
\begin{eqnarray*}
	\varphi((b\delta_g)^\ast)&=& \varphi(\beta_{g^{-1}}(b^\ast) \delta_{g^{-1}})
=\beta_{g^{-1}}(b^\ast) u_g^\ast\ v_{g^{-1}}= (u_g^{\ast}b^\ast u_g) u_g^\ast \ v_{g^{-1}}\\ &=& u_g^\ast b^\ast \ v_{g^{-1}}=(bu_g\ v_g)^\ast=(\varphi(b\delta_g))^\ast.\end{eqnarray*}

Now, we check that $\varphi$ is an algebra homomorphism:
\begin{eqnarray*}
	\varphi(a\delta_g)\varphi(b\delta_h)&=& (au_g\, v_g)(bu_h\, v_h)=au_g bu_g^\ast u_{gh}\, v_{gh})\\
	&=& a\beta_g(b) u_{gh}\, v_{gh}= \varphi(a\beta_g(b)\, \delta_{gh})=\varphi ((a\, \delta_g)(b\, \delta_h)).
\end{eqnarray*}
Next, $\varphi$ is isometric because of the definitions of the norms of  $\ell^1(G;B,\beta)$ and $\ell^1(G,B,\textrm{triv})$ together with the fact that $B$ is a \Cstar-algebras (where $||bu_g||=||b||$), as we can first check on finite sums and then extend it, namely if $x=\sum b_g\, \delta_g$ then
$$||\varphi(x)||= \sum ||b_gu_g|| = \sum ||b_g||= ||x||.$$

Finally, note that $\ell^1(G,B,\textrm{triv})\ \simeq\ \ell^1(G)\, \widehat{\otimes}\, B$ is well known, see e.g.\ \cite[Theorem 1.5.4]{Kaniuth}. In any case, for the sake of completeness, we check that the map $\phi: \ell^1(G,B,\textrm{triv})\ \to\ \ell^1(G)\, \widehat{\otimes}\, B$ such that
$$\sum_g b_g\, v_g\mapsto \sum d_g \widehat{\otimes}\, b_g$$ is an isometric *-isomorphism, where $d_g\in \ell^1(G)$ is the dirac funcion $d_g(h)=1$ if $h=g$ and 0 otherwise. Indeed, $\phi$ is an injective *-homomorphism and if $x=\sum b_g\ v_g$, then it is clear that
\begin{equation}\label{eqleq1}
	||\phi(x)||\leq ||x||.
\end{equation}
 For the surjectivity of $\phi$, let $y=\sum r_i\otimes b_i\in  \ell^1(G)\, \widehat{\otimes}\, B$. Remark now that for every $i$, $r_i=\sum_g \lambda_g^{(i)} d_g$ for certain complex numbers $\lambda_g^{(i)}$. So
$$y=\sum_i r_i \otimes b_i=\sum \sum \lambda_g^{(i)}d_g \otimes b_i=\sum\sum d_g\otimes \lambda_g^{(i)}b_i.$$
Thus, if we set $x:=\sum_g\sum_i \lambda_g^{(i)}b_i\, v_g\in \ell^1(G,B,\textrm{triv})$ then
$\phi(x)=y$, implying the surjectivity of $\phi$. Besides this,
\begin{eqnarray*}
	||x||_{\ell^1(G,B,\textrm{triv})} &=&\sum_g||\sum \lambda_g^{(i)}b_i||\ = \sum\sum |\lambda_g^{(i)}|\, ||b_i||= \sum_i ||r_i||\ ||b_i||
\end{eqnarray*}\\
for any representation of $y=\sum_i r_i\, \otimes b_i$. Thus by the definition of the projective norm we have $||x|| \leq ||\phi(x)||$,
which, together with \eqref{eqleq1}, lead us to conclude that $\phi$ is isometric.
\end{proof}

 Let $A\subset B(H)$ and let $\ell^2(G,H)$ be the Hilbert space of square summable $H$-valued functions on $G$.
Then define a unitary representation $\lambda$ of $G$ and a faithful *-representation $\pi$ of $A$ acting on the same Hilbert space $\ell^2(G,H)$, as follows:
\begin{equation}\label{eqlrep}
	(\lambda_g \xi) (t)=\xi(g^{-1}t),\quad \hbox{and}\ ((\pi(a) \xi) (t)=\alpha_{t^{-1}}(a) \xi (t),\ \ \hbox{with}\ \xi\in \ell^2(G,H),\ g,t\in G.
\end{equation}

\begin{theorem}\label{lemma2}
Let $\beta: G\to Aut(A)$ be an action of a discrete group $G$ on a \Cstar-algebra $A$. Then we there exists a \Cstar-algebra $\widehat{A}$ and a inner action $\widehat{\alpha}$ of $G$ on $\widehat{A}$ such that
$$\ell^1(G,A,\alpha) \ \subset\ \ell^1(G,\widehat{A},\widehat{\alpha}).$$
\end{theorem}
\begin{proof}
Since $A$ is a \Cstar-algebra, we can assume that $A\subset B(H)$ for some Hilbert space $H$.
Then we may use the representations $\pi$ and $\lambda$ defined in \eqref{eqlrep} and set $\widehat{A}=C^\ast(\pi(A), \lambda(G))$ and and the inner action $\widehat{\alpha}(b)=\lambda_g b \lambda_g^\ast$, with $b\in \widehat{A}$, of $G$ on $\widehat{A}$. We then can check the equivariance property of $\pi$:
\begin{equation}\label{eqeq1}
	\widehat{\alpha}_g\circ \pi=\pi \circ \alpha_g, \ \hbox{for all}\ g\in G,
\end{equation}
namely because
\begin{eqnarray*}
 (\lambda_g \pi(a)\lambda_g^\ast \xi) (t)&=&\pi(a) \lambda_g \xi(g^{-1}t)
=\alpha_{t^{-1}g}(a) (\lambda_{g^{-1}} \xi (g^{-1}t))\\
&=& \alpha_{t^{-1}}(\alpha_g(a)) \xi (t)= \pi(\alpha_g(a))\xi (t).
\end{eqnarray*}

We then define a map $\rho: \ell^1(G,A,\alpha)\to \ell^1(G,\widehat{A},\widehat{\alpha})$ such that $a\, \delta_g\mapsto \pi(a)\, \widehat{\delta}_g$, which is now easily checked to be an isometric *-homomorphism.
For example, we check that $\rho$ is a homomorphism, as for $a,b\in A$ and $g,h\in G$ we have
\begin{eqnarray*}
\rho((a\, \delta_g)(b\, \delta_h))&=&\rho(a\alpha_g(b)\, \delta_{gh})=\pi(a\alpha_g(b))\, \widehat{\delta}_{gh}\\
&=&\pi(a)\pi(\alpha_g(b))\, \widehat{\delta}_{gh}=\pi(a)\widehat{\alpha}_g(\pi(b))\, \widehat{\delta}_{gh} \\
&=& \pi(a)\, \widehat{\delta}_g\, \pi(b)\, \widehat{\delta}_h=\rho(a\, \delta_g)\, \rho(b\, \delta_h)
\end{eqnarray*}
where we use \eqref{eqeq1} in the 4th equality. Similarly $\rho$ is *-preserving as
\begin{eqnarray*}
(\rho(a\, \delta_g))^\ast&=&(\pi(a)\, \widehat{\delta_g})^\ast=\widehat{\alpha}_{g^{-1}}(\pi(a^\ast)) \widehat{\delta}_{g^{-1}}\\
&=&\pi(\alpha_{g^{-1}}(a^\ast)) \widehat{\delta}_{g^{-1}}=\rho(\alpha_{g^{-1}}(a^\ast)\, \delta_{g^{-1}})=\rho((a\, \delta_g)^\ast)
\end{eqnarray*}
where we use again \eqref{eqeq1} in the 3rd equality.
\end{proof}


We remark that if $\alpha$ is an inner action, then Theorem \ref{lemma1} implies that $\ell^1(G,A,\alpha)$ is hermitian, provided  $G$ abelian because the projective tensor product between hermitian Banach algebras is still hermitian (provided one of them is abelian), see \cite{bonic}. For a generic action $\alpha$ and making profit of the $\widehat{A}$ is a \Cstar-algebra, and the notion of $G$ being rigidly symmetric (i.e., $\ell^1(G)\, \widehat{\otimes}\, B$ is hermitian with $B$ a \Cstar-algebra) as in \cite[Page 132]{leptin}, we obtain the following result.

\begin{theorem}\label{mainthm}
Let $(A,G,\alpha)$ be a \Cstar-dynamical system, where $A$ is unital and $G$ discrete group.
Then $\cp$ is an hermitian Banach algebra if $G$ is rigidly symmetric.

As examples of such discrete groups $G$ are as follows:\\
(1) finite or \\
(2) abelian or more generally\\
(3) a finite extension of a nilpotent group.
\end{theorem}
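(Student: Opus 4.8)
The plan is to assemble the theorem from the three ingredients already developed in the paper: the embedding chain $\cp \subset \ell^1(G,\widehat{A},\widehat{\alpha}) \simeq \ell^1(G)\,\widehat{\otimes}\,\widehat{A}$ coming from Theorems \ref{lemma1} and \ref{lemma2}, the fact that the hermitian property passes to Banach *-subalgebras, and the known stability results for rigidly symmetric groups. First I would invoke Theorem \ref{lemma2} to embed $\cp$ as a Banach *-subalgebra of $\ell^1(G,\widehat{A},\widehat{\alpha})$, where $\widehat{\alpha}$ is the inner action $\widehat{\alpha}_g(b)=\lambda_g b\lambda_g^\ast$ and $\widehat{A}=C^\ast(\pi(A),\lambda(G))$ is a genuine \Cstar-algebra. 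Then, since $\widehat{\alpha}$ is inner, Theorem \ref{lemma1} gives the isometric *-isomorphisms
\begin{equation*}
	\ell^1(G,\widehat{A},\widehat{\alpha}) \;\simeq\; \ell^1(G,\widehat{A},\mathrm{triv}) \;\simeq\; \ell^1(G)\,\widehat{\otimes}\,\widehat{A},
\end{equation*}
so that $\cp$ sits inside $\ell^1(G)\,\widehat{\otimes}\,\widehat{A}$ as a Banach *-subalgebra.

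Next I would use the definition of rigid symmetry directly: $G$ rigidly symmetric means precisely that $\ell^1(G)\,\widehat{\otimes}\,B$ is hermitian (equivalently symmetric, by Shirali--Ford) for every \Cstar-algebra $B$. Applying this with $B=\widehat{A}$ shows that $\ell^1(G)\,\widehat{\otimes}\,\widehat{A}$ is hermitian. The final step is the inheritance principle recorded in Section \ref{preliminaries}: if $\mathcal{A}\subset\mathcal{B}$ is a Banach *-subalgebra and $\mathcal{B}$ is hermitian, then $\mathcal{A}$ is hermitian, because the boundary inclusion $\partial\mathrm{sp}_{\mathcal A}(a)\subset\partial\mathrm{sp}_{\mathcal B}(a)$ forces the spectrum of any self-adjoint $a\in\mathcal{A}$ to be real. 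Combining these, $\cp$ is hermitian, which is the main assertion.

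For the list of examples, I would simply cite the corresponding classification of rigidly symmetric groups from \cite{leptin}: finite groups, abelian groups, and more generally any finite extension of a nilpotent group are rigidly symmetric. (One should note the sharper statement used in the introduction — a group with a central subgroup $N$ such that $G/N$ is finite or nilpotent — and reconcile it with the three cases listed; abelian and finite are immediate special cases, and the nilpotent-extension case is exactly Leptin--Poguntke's result.) Substituting any such $G$ into the first part yields hermiticity of $\cp$.

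I do not expect a genuine obstacle in this final assembly, since all the analytic work has been front-loaded into Theorems \ref{lemma1} and \ref{lemma2} and into the cited symmetry theory. The only point requiring care is bookkeeping around unitality: Theorem \ref{lemma1} was stated and proved using the implementing unitaries $u_g\in B$ and the standard generators $\delta_g$, so one must make sure $\widehat{A}$ is unital (it is, as a \Cstar-subalgebra of $B(\ell^2(G,H))$ containing $\lambda(e)=1$) and that the isometric *-isomorphisms genuinely restrict correctly to the image of $\cp$. The heart of the matter — that rigid symmetry of $G$ delivers hermiticity of the enveloping tensor product — is precisely the external input we are entitled to assume, so the theorem follows formally once the embedding is in place.
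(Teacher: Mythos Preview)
Your proposal is correct and follows essentially the same approach as the paper: combine Theorems \ref{lemma2} and \ref{lemma1} to obtain the isometric *-embedding $\cp\subset \ell^1(G)\,\widehat{\otimes}\,\widehat{A}$, apply the definition of rigid symmetry to conclude that the latter is hermitian, and then use the fact that hermiticity passes to Banach *-subalgebras; the examples are likewise cited from \cite{leptin}. Your added remark about the unitality of $\widehat{A}$ is a harmless bit of extra bookkeeping not spelled out in the paper's version.
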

\begin{proof}
	Thanks to Theorems \ref{lemma1} and \ref{lemma2}, $\cp \subset \ell^1(G)\, \widehat{\otimes}\, \widehat{A}$ where the embedding is given by an isometric *-homomorphism, thus $\cp$ is closed in $\ell^1(G)\, \widehat{\otimes}\, \widehat{A}$.
	Now note that the property of being hermitian passes to Banach *-subalgebras, so the first part of the theorem follows by the definition of $G$ being rigidly symmetric.
	
	Those examples of rigidly symmetric groups can be retrieved from \cite[Corollary 3]{leptin}. This finishes the proof of the theorem.
\end{proof}

Let $X$ be a compact Hausdorff space,  $\sigma: X\to  X$ a homeomorphism and $(C(X),\mathbb{Z},\alpha)$ the associated \Cstar-dynamical system, where $\alpha_n(f)=f\circ \sigma^{-n}$ with $n\in\mathbb{Z}$.

It is proved in \cite[Theorem 4.7]{marcel1} that $\ell^1(\mathbb{Z}, C(X),\alpha)$ is hermitian if all the points of $X$ are periodic. We are now in ready to generalize this results as follows.

\begin{corollary}\label{corfinal}
For any topological dynamical systema $\Sigma=(X, \sigma)$, we have that  $\ell^1(\mathbb{Z}, C(X),\alpha)$  is an hermitian Banach algebra.
\end{corollary}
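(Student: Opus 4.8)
The plan is to read off Corollary \ref{corfinal} as an immediate instance of the main Theorem \ref{mainthm}. The key observation is that the data $(X,\sigma)$ produces exactly a \Cstar-dynamical system of the type to which Theorem \ref{mainthm} applies: the algebra $A = C(X)$ is a commutative unital \Cstar-algebra (unital precisely because $X$ is compact Hausdorff, so $1_{C(X)}$ is the constant function $1$), the group is $G = \mathbb{Z}$, and the action is $\alpha_n(f) = f \circ \sigma^{-n}$, which is a genuine action of $\mathbb{Z}$ by \Cstar-automorphisms since $\sigma$ is a homeomorphism. I would state these three verifications explicitly so that the hypotheses of Theorem \ref{mainthm} are visibly met.

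The crucial step is to certify that $G = \mathbb{Z}$ is rigidly symmetric, which is what triggers the conclusion of Theorem \ref{mainthm}. This is immediate from case (2) of that theorem: $\mathbb{Z}$ is abelian, hence rigidly symmetric by \cite[Corollary 3]{leptin}. I would therefore invoke Theorem \ref{mainthm} directly with this choice of data to conclude that $\ell^1(\mathbb{Z}, C(X), \alpha)$ is hermitian. One should note that no assumption on the dynamics of $\sigma$ (such as periodicity of points) enters: the argument is uniform over all homeomorphisms $\sigma$, which is exactly what makes this a strict generalization of \cite[Theorem 4.7]{marcel1}.

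There is no real obstacle here, since all the work has already been absorbed into Theorem \ref{mainthm} and its supporting results Theorems \ref{lemma1} and \ref{lemma2}; the corollary is a specialization. If anything deserves a word of care, it is confirming that the setup in the corollary is precisely an instance where Theorem \ref{mainthm} applies, in particular that unitality of $C(X)$ is guaranteed by compactness of $X$ and that the given formula $\alpha_n(f) = f \circ \sigma^{-n}$ indeed defines a group action of $\mathbb{Z}$ (the inverse is used so that $\alpha_m \circ \alpha_n = \alpha_{m+n}$). Once these routine checks are noted, the proof is a single line: apply Theorem \ref{mainthm} with $A = C(X)$ and $G = \mathbb{Z}$, the latter being abelian and hence rigidly symmetric.
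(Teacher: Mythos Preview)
Your proposal is correct and follows exactly the same approach as the paper: the paper's proof is the single sentence ``It is an immediate consequence of Theorem \ref{mainthm}.'' Your version simply makes explicit the routine verifications (unitality of $C(X)$, that $\alpha$ is an action, and that $\mathbb{Z}$ is abelian hence rigidly symmetric via case (2)), which the paper leaves implicit.
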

\begin{proof}
It is an immediate consequence of Theorem \ref{mainthm}.
\end{proof}

We remark that Corollary \ref{corfinal} can be derived from a more primitive fact that the protective tensor product of two hermitian algebras is hermitian again, provided that one of the two factors is commutative (together with the above Theorems \ref{lemma1} and \ref{lemma2}).

We can provide more examples of hermitian crossed product Banach algebras, as an application of Theorem \ref{mainthm}. For example:
\begin{enumerate}
	
	\item Let $\mathcal{O}_n$ be the Cuntz algebra with generators $s_1,...,s_n$. We have the action of $\mathbb{Z}_n$ on $\mathcal{O}_n$ such that  $\alpha_r(s_i)=s_{i+r\, (\hbox{\scriptsize mod}\ n)}$, with $r\in\{0,...,n-1\}$.	
	
Then $\ell^1(\mathbb{Z}_n, \mathcal{O}_n, \alpha)$ is an hermitian Banach algebra.

	
	\item The Heisenberg group $\mathbb{H}$ ($3\times 3$ upper triangular matrices with entries in $\mathbb{Z}$ and diagonal constant equals 1) is an hermitian group because  $\mathbb{H}$ is nilpotent. The group  $\mathbb{H}$ acts on itself $g\cdot h=g^{-1}h$ giving rise to an action on the full group C*-algebra $C^\ast(\mathbb{H})$ by
	$$\alpha_g\left(\sum_h c_h\, \delta_h\right)=\sum_h c_h\, \delta_{g^{-1}h},\quad \hbox{where}\ \sum_h c_h\, \delta_h\in C^\ast(\mathbb{H}).$$
	Then Theorem \ref{mainthm} implies that $\ell^1(\mathbb{H}, C^\ast(\mathbb{H}), \alpha)$ is hermitian. Of course any discrete nilpotent group can likewise produce an hermitian Banach algebra.
\end{enumerate}

\section*{Acknowledgements}
The authors thank Lenny Marcel de Jeu for helpful comments and suggestions.
The second author was partially supported by FCT/Portugal through CAMGSD, IST-ID, projects UIDB/04459/2020 and UIDP/04459/2020.




\begin{thebibliography}{9}

\bibitem{leptin}
H.\ Leptin and D.\ Poguntke,
\textit{Symmetry and nonsymmetry for locally compact groups}, J.\ Functional\ Analysis {\bf 33} (1979),   119--134.


\bibitem{bonic}
R. A.\ Bonic, \textit{Symmetry in group algebras of discrete groups},
Pacific J.\ Math.\ {\bf 11} (1961) 73--94.

\bibitem{jenkins}
J. W.\ Jenkins, \textit{An amenable group with a nonsymmetric group algebra},
Bull.\ Amer.\ Math.\ Soc.\ {\bf 75} (1969), 357--360.


\bibitem{Kaniuth}
E.\ Kaniuth, \textit{A Course in Commutative Banach Algebras},
Springer Graduate Texts in Mathematics {\bf 245} (2009), X+353 pages.


\bibitem{hartimarcelpinto2}
M.\ Jeu, R.\ El Harti, Paulo R.\ Pinto, \textit{Amenable crossed product Banach algebras associated with a class of C*-dynamical systems},
Integral Equations Operator Theory {\bf 87} (2017), 169--178.\\
Doi: https://doi.org/10.1007/s00020-017-2345-2

\bibitem{marcel00}
M.\ de Jeu, C.\ Svensson, J.\ Tomiyama, \textit{On the Banach  *-algebra crossed product associated with a topological dynamical system}, J.\ Funct.\ Anal.\ {\bf 262} (2012), 4746 -- 4765.


\bibitem{marcel1}
M.\ de Jeu, J.\ Tomiyama, \textit{Algebraically irreducible representations and structure space of the Banach algebra associated with a topological dynamical system}, Adv.\ Math.\ {\bf 301} (2016), 79--115.

\bibitem {JTStudia} M.\ de Jeu, J.\ Tomiyama, \emph{Maximal abelian subalgebras and projections in two Banach algebras associated with a topological dynamical system}, Studia Math. {\bf 208} (2012), 47--75.

	
\bibitem{palmer2}
T.\ Palmer,
\textit{Banach Algebras and the General Theory of *-Algebras}: Volume 2, *-Algebras (Encyclopedia of Mathematics and its Applications, Series Number 79), Cambridge University Press, 2001.	
	
\bibitem{tomi}
	 J. Tomiyama, \textit{The interplay between topological dynamics and theory of C*-algebras}, Lecture
	Notes Series, Vol. 2, Seoul National University Research Institute of Mathematics Global
	Analysis Research Center, Seoul, 1992.
	
\bibitem{samei}
E.\ Samei and M.\ Wiersma \textit{Quasi-Hermitian locally compact groups are amenable},
Adv.\ Math.\
{\bf 359} (2020) 106897, doi: https://doi.org/10.1016/j.aim.2019.106897
	
\bibitem{shirali}
	 S.\ Shirali and J. W. M. Ford, \textit{Symmetry in complex involutory Banach
	algebras. II}, Duke Math. J., {\bf 37} (1970) 275--280


\end{thebibliography}
\end{document}